\newtheorem{lemma}{Lemma}
\newtheorem{theorem}{Theorem}
\theoremstyle{remark}
\newtheorem{remark}{Remark}
\newcommand{\KK}{C_{\infty}}
\newcommand{\K}{C}
\newcommand{\MM}{M_{\infty}}
\newcommand{\calo}{O}
\newcommand{\N}{\mathbb{N}}
\def\le{\leqslant}
\def\ge{\geqslant}
\author{Nikolay Vereshchagin\thanks{The article
  was prepared within the framework of the HSE University Basic Research Program and funded by the Russian Academic Excellence Project '5-100'. The author
  was in part funded by RFBR according to the research project № 19-01-00563.}\\
Moscow State University
and  HSE University, Russian Federation.}
\title{Descriptive Complexity of Computable Sequences Revisited}
\begin{document}
\maketitle

\begin{abstract}
  The purpose of this paper is
  to answer two questions left open in [B. Durand, A. Shen, and N. Vereshchagin, Descriptive Complexity of Computable Sequences, Theoretical Computer Science 171 (2001), pp. 47--58].
  Namely, we consider
  the following two
  complexities of an infinite computable 0-1-sequence $\alpha$:
  $C^{0'}(\alpha )$, defined as 
  the minimal length of a program with oracle $0'$ that prints
  $\alpha$,
  and  $\MM(\alpha)$,  defined as $\liminf C(\alpha_{1:n}|n)$, where
  $\alpha_{1:n}$ denotes the length-$n$ prefix of $\alpha$
  and $C(x|y)$ stands for conditional Kolmogorov complexity.
  We show that $C^{0'}(\alpha )\le \MM(\alpha)+O(1)$ and
  $\MM(\alpha)$ is not bounded by any computable function of $C^{0'}(\alpha )$,
  even
  on the domain of computable sequences.
\end{abstract}

\section{Introduction}

The notion of Kolmogorov complexity for finite binary 
strings was introduced in the 60ies independently by Solomonoff, 
Kolmogorov and Chaitin~\cite{sol,kolm,chaitin}.  There are different 
versions (plain Kolmogorov complexity, prefix complexity, etc.\ 
see~\cite{usp} for the details) that differ from each other not more 
than by an additive term logarithmic in the length of the argument. 
In the sequel we are using plain Kolmogorov complexity $C(x|y)$ as
defined in~\cite{kolm}, but similar results can be obtained for prefix
complexity.

When an infinite $0$-$1$-sequence is given, we may study the
complexity  of its finite prefixes. If prefixes have
high complexity, the sequence is random (see~\cite{lv,zl} for
details and references); if prefixes have low complexity, the
sequence is computable. In the sequel, we study  the
latter type.

Let $C(x)$, $C(x|y)$ denote the plain Kolmogorov complexity
 of a binary string $x$ and the conditional
Kolmogorov complexity of $x$ when $y$ (some other
binary string) is known. Let $\alpha_{1:n}$ denote first $n$ bits
(= length-$n$ prefix) of the sequence $\alpha$.
Let us recall the following criteria of computability of $\alpha$
in terms of complexity of its finite prefixes.

\begin{itemize}

\item[(a)] $\alpha$ is computable if and only if
$C(\alpha_{1:n}|n) =\calo(1)$.  This result is attributed in \cite{lov}
to A.R.~Meyer (see also~\cite{lv,zl}).

\item[(b)] $\alpha$ is computable if and only if
$C(\alpha_{1:n})<C(n)+\calo(1)$ \cite{chaitin2}.

\item[(c)] $\alpha$ is computable if and only if
$C(\alpha_{1:n})<\log_2n+\calo(1)$ \cite{chaitin2}.

\end{itemize}

These results provide criteria of the computability of infinite
sequences. For example, (a) can be reformulated as follows:
a sequence $\alpha$ is computable if and only if $M(\alpha)$ is finite,
where
     $$
M(\alpha )=\max_n C(\alpha _{1:n}|n)=\max_n\min_p \{l(p)\mid p(n)=\alpha _{1:n}\}.
     $$
Here $l(p)$ stands for the length of program $p$; $p(n)$ denotes its
output on $n$. As usual in Kolmogorov complexity theory,
we assume that some optimal programming language $U$ is fixed.
That is,
$(p,n)\mapsto U(p,n)$ is a computable function such that
for any other computable function $V(p,n)$ there is
a constant $c$ such that for all $p$ there is $p'$ with
$l(p')\le l(p)+c$ and $U(p',n)=V(p,n)$ for all $n$.
By $p(n)$ we then denote $U(p,n)$; conditional Kolmogorov
complexity is defined as $C(x|n)=\min\{l(p)\mid p(n)=x\}$
and unconditional Kolmogorov
complexity is defined as $C(x)=C(x|0)$.
(For more details see~\cite{lv,zl}.)

Therefore, $M(\alpha)$ can be considered as a complexity measure
of computable sequences. 
Another straightforward approach is to define complexity
of a sequence $\alpha $ as the length of the shortest program
computing $\alpha$:
   $$
C(\alpha )=\min\{l(p)\mid \forall n\ p(n)=\alpha _{1:n}\},
   $$
(and by definition $C(\alpha )=\infty$ if $\alpha $ is not computable.)

The difference between $C(\alpha)$ and $M(\alpha)$ can
be explained as follows: $M(\alpha)\le m$ means that for every $n$
there is a program $p_n$ of size at most $m$ that computes
$\alpha_{1:n}$ given $n$; this program may depend on $n$. On the
other hand, $C(\alpha )\le m$ means that there is a one such program
that works for all $n$. Thus, $M(\alpha)\le C(\alpha)$ for all $\alpha$, and
one can expect that $M(\alpha)$ may be significantly less than
$C(\alpha)$. (Note that the known proofs of (a) give no bounds of
$C(\alpha)$ in terms of $M(\alpha)$.)

Indeed, Theorem~3 from~\cite{bsv} shows that there is no computable
bound for $C(\alpha)$ in terms of $M(\alpha)$: for any computable
function $f(m)$ there exist computable infinite
sequences $\alpha^0, \alpha^1,\alpha^2\dots$ such that $M(\alpha^m)\le
m+\calo(1)$ and $C(\alpha^m)\ge \alpha(m)$.

The situation changes surprisingly when
we compare ``almost all''
versions of $C(\alpha )$ and $M(\alpha )$ defined in the following way:
\begin{align*}
\KK(\alpha )&=\min\{ l(p)\mid \forall^{\infty}n\ p(n)=\alpha _{1:n}\}\\
\MM(\alpha )&=\limsup_n C(\alpha _{1:n}|n)=\min\{m\mid\forall^{\infty}n \exists p\
(l(p)\le m \mbox{ and } p(n)=\alpha _{1:n})\},
\end{align*}
($\forall^{\infty}n$ stands for ``for all but finitely many $n$'').
It is easy to  see  that $\MM(\alpha)$ is finite only for computable
sequences. Indeed, if
$\MM(\alpha)$ is finite, then $M(\alpha)$ is also finite,  and
the computability  of $\alpha$
is implied by Meyer's theorem. All the four complexity
measures mentioned above are ``well calibrated'' in
the following sense: there are $\Theta(2^m)$ sequences whose complexity
does not exceed $m$. 

Surprisingly, it turns out that $\KK(\alpha)\le 2\MM(\alpha)+\calo(1)$
\cite[Theorem~5]{bsv} so the difference between $\KK$ and $\MM$ is
not so large as between $C$ and $M$. 
As this bound is tight: Theorem 6 from \cite{bsv}
proves that for every $m$ there is a sequence $\alpha$
with $\KK(\alpha)\ge 2m$ and $M(\alpha)\le m+O(1)$ (and hence $\MM(\alpha)\le m+O(1)$).

Finally, by Theorem 2 from~\cite{bsv},
$M(\alpha)$ cannot by bounded by any computable function 
of $\KK(\alpha)$ (and hence of $\MM(\alpha)$).

It is interesting also to compare $\KK$ and $\MM$ with
relativized versions of $C$. For any oracle $A$
one may consider a relativized Kolmogorov complexity $C^A$
allowing programs to access the oracle. Then $C^A(\alpha)$ is
defined in a natural way. The results of this comparison
are shown by a diagram (Fig.~\ref{t}).
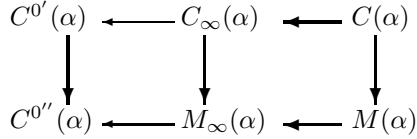
\begin{figure}[h]
\begin{center}
\setlength{\unitlength}{3ex}	
\begin{picture}(6,4)
\put(-3,0){$C^{0''}(\alpha )$}
\thicklines \put(-1.3,2.7){\vector(0,-1){2}}
            \put(2.7,2.7){\vector(0,-1){2}}
            \put(7.7,2.7){\vector(0,-1){2}} \thinlines
\put(-3,3){$C^{0'}(\alpha )$}
\put(1.8,3.1){\vector(-1,0){2.1}}
\thicklines \put(6.7,3.1){\vector(-1,0){1.7}} \thinlines
\put(1.8,0.1){\vector(-1,0){2.1}}
\thicklines \put(6.7,0.1){\vector(-1,0){1.7}} \thinlines
\put(2,3){$\KK(\alpha )$}
\put(2,0){$\MM(\alpha )$}
\put(7,3){$C(\alpha )$}
\put(7,0){$M(\alpha )$}
\end{picture}
\end{center}
\caption{Relations between different complexity
  measures for infinite sequences.
  Arrows go from the bigger quantity to the smaller one (up to
$\calo(1)$-term, as usual). Bold arrows indicate inequalities
that are immediate consequences of the definitions. Other arrows
are provided by \cite[Theorems~1 and 4]{bsv}.}\label{t}
\end{figure}

On this diagram no arrow could be inverted. We have mentioned this
for the rightmost four arrows. For the remaining three
arrows this is obvious.
Indeed,
$C^{0''}(\alpha)$ is finite while $C^{0'}(\alpha)$ is infinite for a sequence
$\alpha$ that is ${0}''$-computable but not
${0}'$-computable. Therefore the leftmost
downward arrow cannot be
inverted. The leftmost leftward arrows cannot be inverted for similar reasons:
$C^{0'}(\alpha)$ and
$C^{0''}(\alpha)$ are finite while $\KK(\alpha)$ and $\MM(\alpha)$ are infinite
for a sequence that is ${0}'$-computable but not
computable.

The statements we cited do not tell
us whether the inequality $C^{0'}(\alpha)\le \MM(\alpha)+\calo(1)$ is true
or not.
Another question left open in~\cite{bsv}
is the following: are the 
inequalities
$$
\KK(\alpha)\le \K^{0'}(\alpha)+O(1),\quad
\MM(\alpha)\le \K^{0''}(\alpha)+O(1)
$$
true on the domain of 
\emph{computable}
sequences? In this paper we answer the first question in positive
and the remaining two questions in negative (Theorems~\ref{th1} and \ref{th2} below).
Thus we get the following diagram
for complexities of computable sequences:
\begin{center}
\setlength{\unitlength}{4ex}	
\begin{picture}(6,4)
\put(-2,0){$\K^{0'}(\alpha)$}
\put(4.8,3.2){$\infty$}
\put(4.8,.2){$\infty$}
\put(.6,.2){$\infty$}
\put(6.8,1.7){$\infty$}
\put(2.8,1.7){$2$}
\put(2.7,2.7){\vector(0,-1){2}}
\put(6.7,2.7){\vector(0,-1){2}}
\put(5.8,3.1){\vector(-1,0){1.7}}
\put(1.8,0.1){\vector(-1,0){2.1}}
\put(5.8,0.1){\vector(-1,0){1.7}}
\put(2,3){$\KK(\alpha)$}
\put(2,0){$\MM(\alpha)$}
\put(6,3){$\K(\alpha)$}
\put(6,0){$M(\alpha)$}
\end{picture}
\end{center}

The sign $2$ near the arrow means that the larger quantity
is at most 2 times the smaller quantity (up to an additive constant),
and the sign $\infty$
means that the larger quantity cannot by bounded
by any computable function of the smaller quantity
even for computable sequences.

It is instructive to compare these  results with similar
results for finite sequences (i.e. strings).
For $x\in\{0,1\}^*$ let
$\MM(x)=\liminf C(x|n)$ and $\KK(x)=\min\{l(p)\mid p(n)=x
\text{ for almost all }n\}$.
From definitions it is straightforward that
$\MM(x)\le \KK(x)\le C^{0'}(x)$ for all $x$ (up to an additive constant).
And by~\cite{ver} 
we have $C^{0'}(x)\le \MM(x)+O(1)$, hence all the three
quantities coincide up to an additive constant.
Similar inequality holds for infinite sequences as well (Theorem~\ref{th1}
from the present paper). However, the analog of the straightforward inequality
$\MM(x)\le C^{0'}(x)+O(1)$ is not true for infinite sequences,
even on the domain of computable sequences.

\section{Theorems and proofs}

\begin{theorem}\label{th1}
   $C^{0'}(\alpha)\le M_{\infty}(\alpha)+O(1)$.
\end{theorem}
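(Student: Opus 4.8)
The plan is to build, using the oracle $0'$, a single program that prints $\alpha$, given only the promise that $\MM(\alpha)=\liminf_n C(\alpha_{1:n}\mid n)\le m$. Fix $m=\MM(\alpha)$. The definition of $\liminf$ says that there are infinitely many ``good'' lengths $n$ at which some program of size $\le m$ outputs $\alpha_{1:n}$ on input $n$; between consecutive good lengths the value $C(\alpha_{1:n}\mid n)$ may spike, but it comes back down infinitely often. So the heart of the matter is: knowing $m$ (hard-wired into the program, costing $m+O(1)$ bits) and having $0'$ available, recover all of $\alpha$.

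The key observation is that the predicate ``there exists a program $p$ with $l(p)\le m$ such that $p(n)=x$'' is $\Sigma^0_1$ in $(x,n)$ (we just dovetail all $\le 2^{m+1}$ programs and wait for one to halt with the right output), hence decidable with oracle $0'$. Using this, for each $n$ I can, with oracle $0'$, enumerate the finite set $S_n=\{x\in\{0,1\}^n\mid \exists p,\ l(p)\le m,\ p(n)=x\}$ of ``$m$-simple'' strings of length $n$; note $|S_n|\le 2^{m+1}$, and for every good $n$ we have $\alpha_{1:n}\in S_n$. Now I run the following $0'$-computable process to define a sequence $\beta$ that will equal $\alpha$: maintain a current finite prefix; to extend it to length $n+1$, look among the strings in $S_{n'}$ for the least $n'>n$ (found using $0'$, since $0'$ can tell when a new element appears and whether the search terminates — this is where I must be a little careful) that are consistent with the current prefix, and commit to the next bit that is ``forced'' in a suitable sense. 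The cleanest way to make this converge is a pruning/voting argument on the tree $\{0,1\}^{<\omega}$: a node $u$ at level $k$ is ``alive'' if $u$ is a prefix of some element of $S_n$ for infinitely many $n$; aliveness is $\Pi^0_2$, hence not directly $0'$-decidable, so instead I work with the $0'$-decidable approximation ``$u$ is a prefix of some element of $S_n$ for some $n$ beyond the current stage'' and argue that, because $\alpha_{1:n}\in S_n$ infinitely often, the true path $\alpha$ is never abandoned while every false branch is eventually, and permanently, overtaken.

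Concretely, here is the construction I would commit to. With oracle $0'$ I compute an increasing sequence of stages and, at stage $t$, a finite prefix $\gamma_t$ of $\beta$, as follows: given $\gamma_t$ of length $\ell_t$, search (using $0'$ to decide halting of the search) for the least $n>\ell_t$ such that $S_n$ contains a string extending $\gamma_t$; among all such strings in that $S_n$, take $\gamma_{t+1}$ to be the length-$(\ell_t+1)$ prefix that the largest number of them share — ties broken by $0$. Since $\alpha_{1:n}\in S_n$ for infinitely many $n$, the search always succeeds, so $\beta$ is a total infinite sequence, $0'$-computable from $m$. It remains to see $\beta=\alpha$. This is the crux and the main obstacle, and I would handle it by a minimal-counterexample argument: let $k$ be least with $\beta_{1:k}\ne\alpha_{1:k}$; then at the stage $t$ where $\ell_t=k-1$ we had $\gamma_t=\alpha_{1:k-1}$, and because infinitely many good $n$ give $\alpha_{1:n}\in S_n$ extending $\alpha_{1:k-1}$, the correct bit $\alpha_k$ always has at least one ``vote'' in every $S_n$ we look at — but that alone does not make it the plurality. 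So the construction above needs one more ingredient: instead of plurality at a single $n$, accumulate votes across all $n$ examined so far, or equivalently commit a bit only once its lead is ``locked'' in the sense that no later $S_n$ can overturn it (again a $0'$-question once we bound how many strings any $S_n$ can contain, namely $2^{m+1}$). With that fix the least-counterexample $k$ leads to a contradiction: the wrong subtree above $\alpha_{1:k-1}$ can receive only finitely many total votes (each wrong child $u$ of $\alpha_{1:k-1}$ fails to be a prefix of $\alpha_{1:n}$ for all large $n$, and I can pass to stages beyond that point), whereas the correct child keeps receiving votes, so eventually its lead is locked and $\gamma$ would have chosen $\alpha_k$. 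Hence $\beta=\alpha$, and since $\beta$ is printed by a $0'$-program of length $m+O(1)$, we get $C^{0'}(\alpha)\le\MM(\alpha)+O(1)$. \eproof
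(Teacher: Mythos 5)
There is a genuine gap, and it is fatal to the plan rather than to a fixable detail. Your program is determined by $m$ alone: given $m$ it runs one deterministic $0'$-computation and outputs one sequence $\beta$, and you then claim $\beta=\alpha$. But there are $\Theta(2^m)$ distinct computable sequences $\alpha$ with $\MM(\alpha)\le m$ (already $000\dots$ and $111\dots$ both have $\MM=O(1)$), and your argument would have to conclude $\beta=\alpha$ for all of them simultaneously. The concrete step where this surfaces is the minimal-counterexample claim that ``each wrong child $u$ of $\alpha_{1:k-1}$ fails to be a prefix of $\alpha_{1:n}$ for all large $n$,'' hence collects only finitely many votes. That $u$ is not a prefix of $\alpha$ does not prevent it from being a prefix of members of $S_n$ for infinitely many $n$: it may lie on another infinite path of $m$-simple strings belonging to a different sequence $\alpha'$ with $\MM(\alpha')\le m$, and indeed up to $2^{m+1}-1$ of the simple strings of each length may sit above the wrong child. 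So the wrong branch can out-vote the correct one forever, no lead is ever ``locked,'' and no tie-breaking or vote-accumulation rule computable from $m$ alone can succeed, for the information-theoretic reason above.

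The paper's proof accepts this multiplicity and spends the $m+O(1)$ program bits on an \emph{index of the intended path}, not on the number $m$. It first replaces $S=\{x\mid C(x|l(x))\le k\}$ by a $0'$-decidable \emph{leafless} set $T$ of the same width $<2^{k+1}$ containing every infinite path of $S$ (Lemma~\ref{l-cut}, proved by a compactness argument; this is the technically hard part your sketch has no analogue of, since ``$x$ lies on an infinite path of $S$'' is $\Pi_2$ and not $0'$-decidable), and then numbers the at most $2^{k+1}$ infinite paths of $T$ by moving tokens along them while $T$ is enumerated (Lemma~\ref{l-col}); the program for $\alpha$ is essentially the $(k+1)$-bit token number of its path, from whose length $k$ itself is recovered. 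If you want to salvage your approach, the missing idea is exactly this: besides $m$, the $0'$-algorithm must receive about $m$ advice bits selecting one of the $\le 2^{m+1}$ candidate branches, and you must show such a selection can be made consistently, which is what the leafless/token machinery achieves. A secondary point: the paper's proof uses the definition $\MM(\alpha)=\limsup_n C(\alpha_{1:n}|n)$ from its introduction, so that all sufficiently long prefixes of $\alpha$ lie in $S$ and form an infinite path; your sketch works from the $\liminf$ reading, under which even the path structure you rely on is unavailable.
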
  
\begin{proof}
  Fix $k$ and consider the set $S$
  of all binary strings $x$ with $C(x|l(x))\le k$.
  This set is computably enumerable uniformly on 
  $k$. The \emph{width} of $S$ is less than $2^{k+1}$
  (this means that for all $n$ the set contains less than
  $2^{k+1}$ strings of length $n$).

  We will view the set $\{0,1\}^*$
  of all binary strings as a rooted tree. Its root is the empty string $\Lambda$
  and each edge connects a vertex  $x$ with its \emph{children}
  $x0$ and $x1$.

\emph{An infinite path in} $S$ is an infinite sequence 
of vertices $x_0,x_1,x_2,\dots$ from $S$  such that 
$x_i$ is a child of $x_{i-1}$ for all $i>0$.
Let us stress that we do not require 
infinite paths start in the root, that is,
$x_0$ may be non-empty.

If $M_{\infty}(\alpha)\le k$, then for some
$n$ prefixes of $\alpha$ of length at least $n$
form an infinite path in $S$.\footnote{If, moreover,
$M(\alpha)\le k$, then 
that path starts in the root.}
We have to show that in this case
$C^{0'}(\alpha)\le k +O(1)$.

The proof will follow from
two lemmas. To state the lemmas we need yet another definition.
A set $T$ of strings is called \emph{leafless},
if for all $x\in T$ at least one child of $x$
is in $T$.

\begin{lemma}[on trimming leaves]\label{l-cut}
For every computably enumerable set 
$S\subset\{0,1\}^*$ of width at most 
$w$ there is a 
computably $0'$-decidable
set $T\subset\{0,1\}^*$
such that 
\begin{enumerate}
\item[(1)] $T$ is leafless,
\item[(2)] the width of $T$ is at most  $w$,
\item[(3)] $T$ includes all infinite paths in $S$.
\end{enumerate}
The program of
the algorithm that
$0'$-recognizes $T$
can be found from $w$ and
the program enumerating $S$.   
\end{lemma}

\begin{lemma}\label{l-col}
Let $T$ is a leafless set of width at most $w$.
The for any infinite 
 0-1-sequence $\alpha$ whose sufficiently large 
 prefixes form and infinite path in $T$
 we have $C^T(\alpha)\le \log w +O(1)$.
The constant $O(1)$ does not depend on $T,\alpha,w$.
Moreover, the program witnessing
the inequality $C^T(\alpha)\le \log w +O(1)$
needs only an oracle enumerating the set $T$ (in any order).
\end{lemma} 

We first finish the proof of the theorem
assuming the lemmas.
By applying Lemma~\ref{l-cut}
to the set $S=\{x\mid C(x|l(x))\le k\}$
we obtain a leafless set 
$T$ of width less than $2^{k+1}$ that includes
all infinite paths in $S$ and is $0'$-decidable
uniformly on $k$.
If $M_{\infty}(\alpha)\le k$, then   
by the second lemma $C^{T}(\alpha)\le k+O(1)$.
Since $T$ is $0'$-decidable
uniformly on $k$ and we can
retrieve $k$ from the length
of the program witnessing the inequality
$C^{T}(\alpha)\le k+O(1)$,
we can conclude that 
$C^{0'}(\alpha)\le k+O(1)$.

It remains to prove the lemmas.
We start with the proof of the simpler
Lemma~\ref{l-col}.

\begin{proof}[Proof of Lemma~\ref{l-col}]
  Basically we have to number
  infinite paths in $T$ in such a way that
  given the number of a path we can find
  all its vertices. We will imagine that we have
  \emph{tokens}  with numbers from $1$ to $w$,
  and move those tokens
  along infinite paths in $T$.
  The number of an infinite path in $T$
  will be the number of the token
  that moves along that path.

  More specifically, we start an enumeration of the set $T$.
  Observing string enumerated in $T$ we will place tokens
  on some of them; vertices baring tokens will be called
  \emph{distinguished}. We will
  do that so that the following be true:

  (1) distinguished vertices are pair wise inconsistent
  (neither of them is a prefix of another one),

  (2) every string enumerated so far in  $T$ is a prefix of some
  distinguished vertex,

  (3) tokens  move only from a vertex to its descendant
  (=extension).

  At the start no strings are enumerated
  so far and all tokens are not used. 
  When a new string $x$ is enumerated into  $T$,
  we first look whether it is a prefix of a distinguished vertex.
  In that case we do nothing, since property (2) remains true.

Otherwise property (2) has been violated. If 
$x$ is an extension of a distinguished string $y$
(such a vertex $y$ is unique by property (1)),
then we move the token from $y$ to $x$ 
keeping (1) and (3) true and
restoring (2).

Finally, if $x$ is inconsistent
with all distinguished nodes,
we take a new token and place it on $x$
restoring (2) and keeping (1).

Since $T$ is leafless and its width
is at most $w$,
the set $T$ cannot have more than $w$ pairwise 
inconsistent strings (for all
large enough $n$ each of those strings has a length-$n$
extension in $T$ and those extensions are pair wise different).
Therefore we do not need more than $w$ tokens.

By construction for every infinite path in $T$
a token is at certain time
placed on a vertex of the path
and moves along the
path infinitely long.

To every natural number $i$ from $1$ to $w$ we assign
a program $p_i$ that for input $n$ waits until the $i$th
token is placed on a string $x$ of length at least $n$,
then it prints the first  $n$ bits of  $x$.
\end{proof}

It remains to prove the first lemma.

\begin{proof}[Proof of Lemma~\ref{l-cut}]
  It seems natural to let $T$ be the union of all 
infinite paths in $S$. In this case the conditions 
(1)--(3) hold automatically. However, this set is only  
$\Pi_2$, since 
$$
T=\{x\mid \forall i \text{ there is an extension of $x$ of length 
   $l(x)+i$ in  $S$}\}.
$$
It is not hard to find an example of a c.e. set $S$ for which this set
$T$ is $\Pi_2$ complete (and hence is not $0'$-decidable).
The set $T$ we construct will be larger in general case than the 
union of all 
infinite paths in $S$.

We will be using Cantor topology
on the set of subsets of $\{0,1\}^*$.
Its base consists of sets of the form:
$$
\{X\subset \{0,1\}^*\mid A\subset X,\ B\cap X=\emptyset\},
$$
where $A,B$ are any \emph{finite}
subsets of $\{0,1\}^*$. Open sets in Cantor topology
are arbitrary unions of these sets.
It is well known that this  topological space is compact.

We will consider leafless sets $T$
such that the width of the set $T\cup S$ does not exceed
$w$.
Such sets will be called \emph{acceptable}.
For instance, the empty set is acceptable.
The key observation is the following: \emph{the family of
acceptable sets is closed in  Cantor topology}.

The set $T$ is defined as the largest acceptable set with respect to
some linear order. More specifically,
consider the lexicographical order
on binary strings (for strings of different length,
the shorter string is less than the longer one).
Then we define $X<Y$ for different sets $X,Y\subset \{0,1\}^*$
if the lex first string in the symmetric difference of
$X,Y$  belongs to $Y\setminus X$ (in other words,
we compare sets according to the lexicographical order on their characteristic
sequences).
Not every non-empty family of subsets of
$\{0,1\}^*$ has the largest set with respect to this order.
However, this holds for \emph{closed} families.
Hence there exists the largest acceptable set $T$.

In other words, one can define $T$ recursively:
enumerate all binary strings $x_1,x_2,\dots$ according to
the lexicographical order, then 
put $x_i$ in $T$ if there is an acceptable set $R$ which
includes the set
$T\cap\{x_1,\dots,x_{i-1}\}$, or, equivalently, 
$$
R\cap\{x_1,\dots,x_{i-1}\}=T\cap\{x_1,\dots,x_{i-1}\}.
$$
This definition guarantees that for all $i$
there is an acceptable set  $R$ with 
$R\cap\{x_1,\dots,x_{i-1}\}=T\cap\{x_1,\dots,x_{i-1}\}$.
Since the family of acceptable sets is closed,
this implies acceptability of $T$.
And by construction this $T$ is larger than 
or equal to every acceptable set.

Properties (1) and (2) hold automatically for $T$.
Let us verify the property (3).
Let $\alpha$ be an infinite path in  $S$.
Consider the set  $T'=T\cup\alpha$.
It is leafless  (since every  vertex from $\alpha$
has a child in
$\alpha$).
Besides, $T'\cup S=T\cup S$ (as  $\alpha\subset S$),
hence  $T'$ is acceptable.
The definition of $T$ implies that it is a maximal
w.r.t. inclusion acceptable set.
Therefore  $T'=T$, or, in other words,  $\alpha\subset T$.

It remains to show that
$T$ is $0'$-decidable. Assume that we already know for every
string among $x_1,\dots, x_{i-1}$ whether it belongs to $T$ or not.
We have to decide whether $x_i\in T$.
By construction  $x_i$ is in  $T$
if and only if there is an
acceptable set including the set $T\cap\{x_1,\dots, x_{i-1}\}$ and $x_i$.
Thus it suffices to prove that
for any finite $E\subset\{0,1\}^*$
we can decide with the help of $0'$ whether there is an
acceptable set including  $E$ or not.
To this end we reformulate this property of $E$.
Fix a computable enumeration
of $S$ and denote by $S_j$ the subset of $S$ consisting of all
strings enumerated in $j$ steps.

Call a set  $R$ \emph{acceptable at time $j$} if
it is leafless and the width of  $R\cup S_j$ is at most  $w$.
We claim that
\begin{quote}
there is an acceptable set including  $E$
if and only if
for all $j$ there is a set $R_j$ including  $E$
that is acceptable at time $j$.
 \end{quote}
Since acceptability implies acceptability at time $j$ for all $j$,
one direction is straightforward.
In the other direction: assume that for every
$j$ there is a set
$R_j \supset E$
which is acceptable at time
$j$. We have to construct an 
acceptable set $R\supset E$.

By compactness arguments, the sequence
 $R_1,R_2,\dots$
has an accumulation point $R$.
Since both properties
``to include $E$'' and ``be leafless'' are closed,
the set $R$ possesses these properties.
It remains to show that
the width of the set $R\cup S$ is at most $w$.
For the sake of contradiction
assume that there are $w+1$ strings of the same length $n$ that
belong to $R\cup S$.
Then consider the 
(open) family that consists 
of all sets $R'$ such that 
the set  $R'\cup S$ includes all those strings.
Since $R$ is in this family, for infinitely many  $j$
the set $R_j$ is in this family.
Choose such a $j$ for which
 $S_j$ includes all those strings. We obtain a contradiction, as 
the width of the set                
$R_j\cup S_j$ is at most $w$.

It remains to show decidability of the following property of the pair
 $E,j$:
\emph{there is a set $R$ including  $E$
that is acceptable at time $j$} 
(indeed, in this case the oracle
$0'$ is able to decide whether this property holds
for all  $j$).
Indeed, the sets $S_j$ and $E$ are finite. Let $n$ be the maximal
length of strings from these sets. Without
loss of generality we may assume  
that each string $x\in R$ of length $n$ or larger
has exactly one child in $R$, namely,  $x0$,
and all strings from $R$ of length larger than $n$
are obtained from strings of length $n$ from $R$ by appending zeros.
Such sets $R$ are essentially finite objects
and there are finitely many of them.
For any such set we can decide whether it
includes  $E$ and is acceptable at time $j$.
The lemma is proved.   
    \end{proof}

\begin{remark}
  The set $T$ constructed in the proof of Lemma 2
  can be defined in several ways.
In the original proof,
it was defined as the limit of the sequence
$R_1,R_2,\dots$ where $R_j$ is the largest set
that is acceptable at time $j$.
One can show that this sequence has a limit indeed.
So defined, $T$ is obviously 
$0'$-decidable.
B. Bauwens suggested  another way to define (the same)
set $T$:
include $x_i$ in $T$ if for all $j$
there is a set that is acceptable at time
$j$ and includes $T\cap\{x_1,\dots, x_{i-1}\}$ and $x_i$.
Again, so defined $T$ is obviously $0'$-decidable.
In the above proof, we defined
$T$ in a way that is independent
on the chosen enumeration of the set $S$.
This construction of $T$  simplifies the verification
of properties (1)--(3), but proving $0'$-decidability of $T$
becomes harder.
\end{remark}
\end{proof}

Now we know that 
 $C^{0'}(\alpha)\le M_{\infty}(\alpha)+O(1)$.
How large can be the gap between
$C^{0'}(\alpha)$ and 
$M_{\infty}(\alpha)$?
For $\alpha$ equal to the characteristic sequence of $0'$
the gap is infinite,
since $C^{0'}(0')$ is finite while
$M_{\infty}(0')$ is infinite.
However, we are mostly interested in computable sequences,
thus we refine the question:
How large can be the gap between
$C^{0'}(\alpha)$ and 
$M_{\infty}(\alpha)$ for \emph{computable} sequences $\alpha$?

It turns out that this such gap can be arbitrary large:
$M_{\infty}(\alpha)$ cannot be bounded 
by any computable function of $C^{0'}(\alpha)$.
More specifically, the following holds:
\begin{theorem}\label{th2}
  For any computable function
  $f:\N\to \N$ for all  $m$ there is a  \emph{computable} 
  sequence $\alpha$ for which  $C^{0'}(\alpha)\le m+O(1)$
while $M_{\infty}(\alpha)\ge f(m)$. The constant $O(1)$
depends on the function $f$.
\end{theorem}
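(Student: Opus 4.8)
The plan is to build, for each $m$, a computable sequence $\alpha$ that is ``cheap'' for a $0'$-oracle but ``expensive'' for $\MM$. The $0'$-cheapness will come from encoding $\alpha$ as one of only $\poly(m)$-many candidate sequences, where the correct choice among the candidates is decidable in $0'$; this gives $C^{0'}(\alpha)\le m+O(1)$ (more precisely $\le 2\log m+O(1)$, which after replacing $m$ by a suitable power is of the required form — or one simply uses $O(1)$ many candidates per ``level'' and indexes levels by $0'$-queries). The $\MM$-expensiveness requires that for \emph{almost all} $n$, every prefix $\alpha_{1:n}$ has $C(\alpha_{1:n}\mid n)\ge f(m)$; equivalently, $\alpha$'s prefixes must eventually leave the c.e. set $S_{f(m)}=\{x\mid C(x\mid l(x))\le f(m)\}$ and stay out. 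So the construction must diagonalize against $S_{f(m)}$ while keeping $\alpha$ computable and $0'$-describable.

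First I would set $k=f(m)$ and consider the c.e. set $S=S_k$ of width $<2^{k+1}$. The idea is a finite-injury / movable-marker construction on the binary tree: we reserve a small ``budget'' of candidate computable sequences $\beta^{(1)},\dots,\beta^{(N)}$ with $N$ polynomial in $m$ (small compared to $2^k$, which is where the $f$-dependence of the constant enters), and we define $\alpha$ to follow one of them. Each candidate is built by a computable process that watches the enumeration of $S$: whenever the current finite prefix of a candidate is threatened to be contained in a long path of $S$, we are allowed to switch the candidate's continuation to avoid $S$. Because $S$ has bounded width, at each length there are $<2^{k+1}$ forbidden strings, so with enough candidates we can always find a continuation avoiding all of them; the bookkeeping is a priority argument where candidate $i$ has higher priority than $i+1$. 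The final $\alpha$ is the candidate that is eventually never forced to switch — this is a $0'$-question (``which candidate stabilizes''), which is exactly what lets a $0'$-oracle name $\alpha$ by a short index, giving $C^{0'}(\alpha)\le m+O(1)$. Meanwhile $\alpha$ itself is computable because it equals one particular (computably constructed) candidate; the non-uniformity is only in \emph{knowing which one}, which costs nothing for plain computability of the single resulting sequence.

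For the lower bound $\MM(\alpha)\ge f(m)$, I would argue that the stabilized candidate, by the very rule that governed its construction, has the property that cofinitely many of its prefixes are \emph{not} extendible to an infinite path inside $S$, and in fact lie outside $S$ from some point on — hence $C(\alpha_{1:n}\mid n)>k=f(m)$ for all large $n$, so $\liminf_n C(\alpha_{1:n}\mid n)\ge f(m)$. The delicate point is to ensure the switching rule actually forces the prefixes \emph{out of} $S$ rather than merely out of \emph{infinite paths of} $S$; this is handled by noting that if $\alpha_{1:n}\in S$ for infinitely many $n$ then by König's lemma (width $<2^{k+1}$, so each node of $S$ on $\alpha$ has only finitely many $S$-descendants unless it starts an infinite $S$-path) some prefix of $\alpha$ would start an infinite path in $S$, contradicting that we kept switching away — so one must design the injury so that ``$\alpha_{1:n}\in S$ infinitely often'' triggers infinitely many switches, contradicting stabilization.

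The main obstacle I expect is reconciling the three demands simultaneously: (i) $\alpha$ is genuinely computable (so the construction of each candidate must be an honest algorithm, with all the $S$-watching done constructively and each switch taking effect after a finite stage), (ii) only the \emph{selection} among candidates, not the candidates themselves, uses $0'$, and (iii) the switching discipline is strong enough to push prefixes permanently out of $S$. Concretely, the hard part will be bounding the number of candidates $N$: a naive argument might need $N$ growing with the number of times $S$ can ``catch up'' to $\alpha$, which is not obviously bounded; the fix is to observe that each switch of candidate $i$ is caused by the appearance of a \emph{new} element in a width-$2^{k+1}$ set along a \emph{bounded-depth} window, and to charge switches to these finitely-many-per-window events, so that the total number of distinct stabilized-candidate indices needed across all $m$-many priorities stays polynomial in $m$. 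Making this charging argument precise — and verifying that the resulting $\alpha$ has all its cofinite prefixes strictly outside $S_{f(m)}$ — is the crux of the proof.
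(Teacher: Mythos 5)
There is a fatal counting flaw in your plan that no charging argument can repair. You insist on $N=\poly(m)$ candidates $\beta^{(1)},\dots,\beta^{(N)}$, each produced by an honest algorithm uniformly in its index, with $0'$ used only to \emph{select} among them. But then each $\beta^{(i)}$ is computable from $(i,m)$ plus an $O(1)$-size description of $f$ and of the construction, so $C(\beta^{(i)}_{1:n}\mid n)\le \log N+2\log m+O(1)=O(\log m)$ for \emph{every} $n$. As soon as $f(m)$ exceeds this bound (which happens for every large $m$ unless $f$ grows only logarithmically), \emph{every} prefix of \emph{every} candidate lies in $S_{f(m)}$, so $\MM(\beta^{(i)})=O(\log m)<f(m)$ no matter which candidate stabilizes. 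The requirement $\MM(\alpha)\ge f(m)$ forces the candidate pool to have size on the order of $2^{f(m)}$; the whole difficulty of the theorem is then to select one of exponentially many candidates using only $O(\log m)$ bits of $0'$-information while still certifying high complexity of its prefixes. A second, independent problem: you aim to push the prefixes of $\alpha$ \emph{permanently} out of $S_{f(m)}$ (the $\liminf$ reading of $\MM$). Under the definition actually used in the paper's proofs, $\MM(\alpha)=\limsup_n C(\alpha_{1:n}\mid n)$, so only \emph{infinitely many} high-complexity prefixes are needed, and this weakening is essential: a forward-committing switching construction can never guarantee that a committed prefix stays out of a c.e.\ set, since the string may be enumerated into $S$ after you have committed to it, and switching only alters future bits. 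Indeed, the paper's own witness $\alpha$ has $C(\alpha_{1:n}\mid n)\le f(m)+O(\log m)$ for \emph{all} $n$ (that is precisely how its computability is obtained, via the bounded width of a c.e.\ set and Meyer's theorem) and achieves $C(\alpha_{1:n}\mid n)\ge f(m)$ only on an infinite set of lengths. Your K\"onig's-lemma remark does not bridge this gap: the obstruction is the timing of the enumeration of $S$, not the tree structure of $S$.

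For contrast, the paper proceeds by a game. Alice paints at most $w=2^{f(m)}$ strings of each length green, Bob paints strings red, and Alice wins if either $w$ strings of one length become red--green or some infinite path of lex-first green strings has infinitely many non-red nodes; Alice has a computable winning strategy, built recursively by nesting the $w$-strategy in subtrees of the form $0^l1$. Played against the blind Bob who paints $x$ red upon finding a program of length $<f(m)$ mapping $l(x)$ to $x$, the first winning condition is unattainable, so Alice wins in the second way. The resulting path $\alpha$ is computable because the green nodes form a c.e.\ set of bounded width; $C^{0'}(\alpha)\le\log m+O(1)$ because $0'$ can compute the final lex-first green string of each length from $m$ alone (a $\Pi_1$ selection criterion, unlike your $\Sigma_2$ criterion ``which candidate eventually stops switching''); and $\MM(\alpha)\ge f(m)$ because infinitely many of its prefixes are never painted red.
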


\begin{proof}
  A natural approach to construct such
  sequence $\alpha$ is to take
  a sufficiently long prefix of
  $0'$ and extend it by zeros.
  More specifically, let $x$ stand
  for a prefix encoding of the first $m$ bits
  of $0'$, say $x=0^m1{0'}_{1:m}$, and let $\alpha=x000\dots$.
  This approach fails, as
  whatever $m$ we choose the complexities
  $C^{0'}$ and    $M_{\infty}$ of this
  sequence coincide up to an additive constant.
  Indeed, $C^{0'}(\alpha)\ge C^{0'}(m)-O(1)$,
  since from $\alpha$ we can find $m$.
  On the other hand, $M_{\infty}(\alpha)\le C^{0'}(m)+O(1)$:
  pick a program $p$ with oracle $0'$
  whose length is $C^{0'}(m)$ and that prints $m$.
Assume that
  $n$  is larger than the number of steps
  needed to enumerate all numbers at most
  $m$ into $0'$ and is larger than all queries by $p$ to its oracle.
  Then we can find $x$ from
  $n$ and $p$: first make $n$ steps of enumerating
  $0'$ and run $p$ with the subset $A$ of $0'$ we have obtained
  instead of the full oracle $0'$. The program $p$ will print $m$. 
  Then we find $x$, as the length-$m$ prefix of the characteristic
  sequence of $A$ and output $\alpha_{1:n}$.

  To prove the theorem we will use the Game Approach.
Assume that a natural parameter $w$ is fixed.
Consider the following game between two players, Alice and Bob.
Players turn to move alternate.
On each move each player can paint any string  or do nothing.
We will imagine that Alice uses green color and Bob uses red color
(each string can be painted in both colors).
For every $n$ Alice may paint at most  $w$ strings of length 
$n$. The player make infinitely many
moves and then the game ends.
Alice wins if (1) for some $n$
there are $w$ strings of length $n$ who all have been painted by both players
($w$ red-green strings of the same length), or (2)
there is an infinite 0-1-sequence $\alpha$
such that $\alpha_{1:n}$ is 
the lex first green string of length $n$ for all $n$
and $\alpha_{1:n}$ has not been painted by Bob (is not red)
for infinitely many $n$.

From the rules of the game it is clear that it does not matter
who starts the game (postponing a move does not hurt).

\begin{lemma}
  For every $w$ Alice has a winning strategy
  in this game and this strategy is computable uniformly on $w$.
\end{lemma}
\begin{proof}
  Alice's strategy is recursive.
  If $w=1$, then Alice just paints the strings
  $\Lambda,0,00,000,\dots$ (on her $i$th move she paints
  the string $0^i$).

  Assume now that we have already defined Alice's winning strategy
  in the $w$-game,   we will call it the \emph{$w$-strategy}.
Then Alice can win $(w+1)$-game as follows:
she paints first the empty string
and then runs the $w$-strategy in the subtree
with the root 1.\footnote{Formally, that means that Alice adds prefix 1 to
every move made by the $w$-strategy, postpones
Bob's moves that do not start with 1, and for every Bob's
move of the form $1x$ tells the $w$-strategy that Bob has made the move $x$.}
If $w$-strategy wins in the first  way
(that is, for some  $n$ there are $w$ red-green
strings of length $n$ that start with 1),
then Alice stops  $w$-strategy.
She then paints the strings 
$0,00,000,\dots,0^m$ where $m$ is larger
than the length of all strings painted by $w$-strategy.
Then Alice runs $w$-strategy for the second time, but this time in the
subtree with the root $0^m1$. Again, if the second run of
$w$-strategy wins in the first  way,
then Alice stops it
and paints the strings
$0^{m+1}, 0^{m+2},\dots, 0^l$  where $l$ is larger
than the length of all strings painted green so far. And so on.

The $w+1$-strategy is constructed. Let us show that it obeys the rules,
that is, for all $n$ it paints at most
$w+1$ strings of length $n$. Indeed, for each $n$
at most  $w$ strings of length $n$ were painted by 
a run of $w$-strategy (different runs of $w$-strategy paint strings
of different lengths) and besides the string $0^n$ might be painted.

Let us show that $w+1$-strategy wins the game. We will distinguish
two cases.

\emph{Case 1.} We have run $w$-strategy infinitely many times.
Then each its run has won in the first way.
Hence  for infinitely many $n$ there exist 
$w$ red-green strings of length $n$ and all those strings
have a 1 (indeed, we have run $w$-strategies only
in subtrees with roots of the form $00\dots01$).
Consider now the strings $0^n$ for those $n$'s.
If at least one of them has been painted by Bob,
then we have won in the first way.
Otherwise the nodes $\Lambda,0,00,\dots$
are lex first green strings of lengths  $0,1,2,\dots$
and infinitely many of them are not red.
This means that we have won in the second way.

\emph{Case  2.} A run of $w$-strategy, say in the subtree
with root $0^l1$,
has not been stopped
and hence it won in the second way.
Then the string $0^l1$ has been extended by an infinite
green path $P$ (including the string  $0^l1$ itself) that
contains infinitely many non-red nodes.
Since, all the nodes 
$0,00,000,\dots,0^l$ are also green, the path
$0^l1P$ consists entirely of green nodes, starts in the root,
contains infinitely many non-red nodes and
all its nodes are lex first green nodes 
(recall that all strings wit prefix $0^l0$ have not been
painted by Alice).
\end{proof}

To prove the theorem we apply 
$2^{f(m)}$-strategy against the following ``blind''
Bob's strategy: Bob paints a string $x$ of length $n$
when he finds a program  
$p$ of length less than $f(m)$ with $p(n)=x$ (he
runs all programs of length  less than $f(m)$
on all inputs in a dovetailing style).
This strategy is computable and for all
$n$ it paints less than $2^{f(m)}$ strings  of
length $n$. Hence Alice wins in the second way:
there is an infinite green
path whose infinitely many nodes are not red.
Call this path $\alpha$. By construction
$M_{\infty}(\alpha)\ge f(m)$.

On the other hand, the set
of all green nodes is computably enumerable
and its width is at most 
$f(m)$. Hence $M(\alpha)<f(m)+O(1)$ and
by Meyer's theorem $\alpha$ is computable.

Finally, the path $\alpha$ can by computed from $m$
with oracle $0'$: for every $n$
we can find the lex first green string for length $n$
Hence $C^{0'}(\alpha)<\log m+O(1)<m+O(1)$.
The theorem is proved.
\end{proof}

\section*{Acknowledgments}
The author is sincerely grateful to Bruno Bauwens
for providing an alternative proof of Lemma~\ref{l-cut}.

\end{document}